\numberwithin{equation}{section}
\theoremstyle{plain}
\newtheorem{theorem}{Theorem}[section]
\newtheorem{lemma}{Lemma}[section]
\theoremstyle{remark}
\newtheorem{remark}{Remark}[section]
\DeclareMathOperator{\td}{d\mspace{-1mu}}
\begin{document}

\title{Some inequalities for complete elliptic integrals}

\author[L. Yin]{Li Yin}
\address[Yin]{Department of Mathematics, Binzhou University, Binzhou City, Shandong Province, 256603, China}
\email{\href{mailto: L. Yin<yinli_79@163.com>}{yinli\_79@163.com}}

\author[F. Qi]{Feng Qi}
\address[Qi]{School of Mathematics and Informatics, Henan Polytechnic University, Jiaozuo City, Henan Province, 454010, China}
\email{\href{mailto: F. Qi<qifeng618@gmail.com>}{qifeng618@gmail.com}, \href{mailto: F. Qi<qifeng618@hotmail.com>}{qifeng618@hotmail.com}, \href{mailto: F. Qi<qifeng618@qq.com>}{qifeng618@qq.com}}
\urladdr{\url{http://qifeng618.wordpress.com}}

\subjclass[2010]{Primary 33E05; Secondary 26D15, 33C75}

\keywords{inequality; complete elliptic integral; Lupa\c{s} inequality}

\thanks{The first author was supported partially by the Natural Science Foundation of Shandong Province under grant number ZR2012AQ028}

\begin{abstract}
In the paper, by using Lupa\c{s} integral inequality, the authors find some new inequalities for the complete elliptic integrals of the first and second kinds. These results improve some known inequalities.
\end{abstract}

\maketitle

\section{Introduction}

Legendre's complete elliptic integrals of the first and second kind are defined for real numbers $0<r<1$ by
\begin{equation}
\kappa(r)=\int_0^{\pi /2} {\frac{1} {{\sqrt{1-r^2\sin^2t}\,}}} \td t=\int_0^1 {\frac{1} {{\sqrt{(1-t^2 )(1-r^2 t^2 )}\,}}}
\td t
\end{equation}
and
\begin{equation}
\varepsilon(r)=\int_0^{\pi /2} {\sqrt{1-r^2\sin^2t}\,} \td t =
\int_0^1 {\sqrt{\frac{{1-r^2 t^2}} {{1-t^2}}}\,} \td t
\end{equation}
respectively. They can also defined by
\begin{equation}
\kappa(r,s)=\int_0^{\pi /2} {\frac{1} {{\sqrt{r^2\cos^2t +
s^2\sin^2t}\,}}} \td t
\end{equation}
 and
\begin{equation}
\varepsilon(r,s)=\int_0^{\pi /2} {\sqrt{r^2\cos^2t+s^2\sin
^2 t}\,} \td t.
\end{equation}
Letting $r'=\sqrt{1-r^2}\,$. We often denote
$$
\kappa'(r)=\kappa(r')\quad \text{and}\quad \varepsilon'(r)=\varepsilon(r').
$$
These integrals are special cases of Gauss hypergeometric
function
$$
F(a,b;c;x)=\sum_{n=0}^\infty{\frac{{(a,n)(b,n)}} {{(c,n)}}\frac{{x^n}} {{n!}}},
$$
where $(a,n)=\prod_{k=0}^{n-1} {(a+k)}$. Indeed, we have
$$
\kappa(r)=\frac{\pi}{2}F\biggl(\frac{1} {2},\frac{1}{2};1;r^2 \biggr) \quad\text{and}\quad
\varepsilon(r)=\frac{\pi}{2}F\biggl(-\frac{1} {2},\frac{1}{2};1;r^2 \biggr).
$$
For more information on the history, background, properties, and applications, please refer to~\cite{avv, avz} and related references.
\par
Recently, some bounds for $\varepsilon(r)$ and $\kappa(r)$ were discovered in the paper~\cite{gq}. For example, Theorem~1 in~\cite{gq} states that, for $0<r<1$,
\begin{equation}\label{guo-qi-ellp-ineq-log}
\frac{\pi} {2}-\frac{1} {2}\ln \frac{{(1+r)^{1-r}}} {{(1-
r)^{1+r}}} < \varepsilon(r) < \frac{{\pi-1}} {2}+\frac{{1-
r^2}} {{4r}}\ln \frac{{1+r}} {{1-r}}.
\end{equation}
For more information on inequalities of complete elliptic integrals, please refer to~\cite{a, adv, aqv, qh} and a short survey in~\cite[pp.~40\nobreakdash--46]{refine-jordan-kober.tex-JIA}.
\par
Motivating by the double inequality~\eqref{guo-qi-ellp-ineq-log}, some estimates for $\varepsilon(r)$ in terms of rational functions of the arithmetic, geometric, and roots square means were obtained in~\cite{cwqj, wc, wcqj}.

The aim of this paper is to establish some new inequalities for the complete elliptic integrals.

\section{Lemmas}

In order to prove our main results, the following lemma is necessary.

\begin{lemma}[Lupa\c{s} integral inequality~{\cite[p.~57]{bd}}]\label{yin-qi-lem2.1}
If $f',g'\in{L_{2}[a,b]}$, then
\begin{multline}
\biggl| {\frac{1} {{b-a}}\int_a^b {f(t)g(t)\td t}-\frac{1} {{b-
a}}\int_a^b {f(t)\td t\frac{1} {{b-a}}\int_a^b {g(t)\td t}}} \biggr|\\*
\le \frac{{b-a}} {\pi^2}\bigl\| f' \bigr\|_2 \bigl\|g' \bigr\|_2,
\end{multline}
where
$$
\bigl\| f' \bigr\|_2 =\biggl( {\int_a^b {\bigl| {f'(t)}
\bigr|^2 \td t}} \biggr)^{1/2} \quad \text{and} \quad \bigl\| g' \bigr\|_2 =
\biggl( {\int_a^b {\bigl| {g'(t)} \bigr|^2 \td t}} \biggr)^{1/2}.
$$
\end{lemma}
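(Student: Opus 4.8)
The plan is to recognise the left-hand side as the absolute value of the \emph{Chebyshev functional}
\[
T(f,g)=\frac{1}{b-a}\int_a^b f(t)g(t)\td t-\left(\frac{1}{b-a}\int_a^b f(t)\td t\right)\left(\frac{1}{b-a}\int_a^b g(t)\td t\right),
\]
and to exploit the elementary identity
\[
T(f,g)=\frac{1}{b-a}\int_a^b\bigl(f(t)-\overline{f}\,\bigr)\bigl(g(t)-\overline{g}\,\bigr)\td t,
\]
where $\overline{f}=\frac{1}{b-a}\int_a^b f(t)\td t$ and $\overline{g}$ is defined analogously. First I would apply the Cauchy--Schwarz inequality in $L_2[a,b]$ to the latter integral, obtaining $|T(f,g)|\le\sqrt{T(f,f)}\,\sqrt{T(g,g)}$. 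This decouples the two functions and reduces the whole problem to a single one-variable estimate for the variance $T(f,f)=\frac{1}{b-a}\int_a^b\bigl(f-\overline{f}\,\bigr)^2\td t$.

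The reduction shows it suffices to establish the Wirtinger--Poincar\'e inequality
\[
\int_a^b\bigl(f(t)-\overline{f}\,\bigr)^2\td t\le\frac{(b-a)^2}{\pi^2}\int_a^b\bigl|f'(t)\bigr|^2\td t
\]
for every $f$ with $f'\in L_2[a,b]$, since this yields $T(f,f)\le\frac{b-a}{\pi^2}\|f'\|_2^2$ and likewise $T(g,g)\le\frac{b-a}{\pi^2}\|g'\|_2^2$; multiplying the two square roots then produces exactly the constant $\frac{b-a}{\pi^2}$ claimed in the bound.

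The heart of the argument, and the step I expect to be the main obstacle, is proving this sharp Poincar\'e inequality with the optimal constant $(b-a)^2/\pi^2$. I would first rescale to the interval $[0,\pi]$ by a linear change of variable, reducing matters to the classical Wirtinger inequality $\int_0^\pi h^2\le\int_0^\pi\bigl(h'\bigr)^2$ for functions $h$ of mean zero. To prove the latter I would expand $h$ in a Fourier cosine series $h(s)=\sum_{n=1}^\infty a_n\cos(ns)$, where the constant mode is absent precisely because $h$ has zero mean. Since $h'\in L_2$ forces $h$ to be absolutely continuous, an integration by parts gives the sine coefficients of $h'$ as $-n a_n$, the boundary terms vanishing automatically because $\sin(ns)$ is zero at both endpoints regardless of the values of $h(0)$ and $h(\pi)$; this is exactly why the cosine system, rather than the sine system, is the correct choice for a general $f$ with free endpoint behaviour. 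Parseval's identity then produces
\[
\int_0^\pi\bigl(h'(s)\bigr)^2\td s-\int_0^\pi h(s)^2\td s=\frac{\pi}{2}\sum_{n=1}^\infty\bigl(n^2-1\bigr)a_n^2\ge 0,
\]
which is the desired inequality.

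Finally I would reverse the change of variables to carry the constant back to $[a,b]$, substitute the two resulting variance bounds into the Cauchy--Schwarz estimate of the first paragraph, and conclude. The equality case, attained when both $f-\overline{f}$ and $g-\overline{g}$ are proportional to $\cos\!\bigl(\pi(t-a)/(b-a)\bigr)$, confirms that the constant $1/\pi^2$ cannot be improved.
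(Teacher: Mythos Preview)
Your argument is correct: the reduction via the Chebyshev functional identity and Cauchy--Schwarz to the variance bound $T(f,f)\le\frac{b-a}{\pi^2}\|f'\|_2^2$, followed by the sharp Poincar\'e--Wirtinger inequality proved through a cosine expansion on $[0,\pi]$, is the standard route to this result, and the details you give (in particular the observation that the boundary terms vanish because $\sin(ns)$ is zero at $0$ and $\pi$, so no endpoint condition on $f$ is needed) are handled properly.

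As for comparison with the paper: the paper does not prove this lemma at all. It is quoted as a known result from the cited reference and used only as a tool in the subsequent theorems. So you have supplied a full proof where the authors supply none; there is nothing further to compare.
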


\section{Main Results}

Now we are in a position to find some inequalities for complete elliptic integrals.

\begin{theorem}
For $r\in(0,1)$, we have
\begin{equation}\label{(3.1)}
\frac{{\pi \sqrt{6+2\sqrt{1-r^2}\,-3r^2}\,}} {{4\sqrt2\,}}
\le \varepsilon(r) \le \frac{{\pi \sqrt{10-2\sqrt{1
- r^2}-5r^2}\,}} {{4\sqrt2\,}} .
\end{equation}
\end{theorem}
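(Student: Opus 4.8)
The plan is to apply the Lupaş integral inequality from Lemma~\ref{yin-qi-lem2.1} to a clever factorization of the integrand defining $\varepsilon(r)$. The natural choice is to write $\varepsilon(r)=\int_0^{\pi/2}\sqrt{1-r^2\sin^2 t}\,\td t$ and take $f(t)=g(t)=\sqrt{1-r^2\sin^2 t}\,$ on $[a,b]=[0,\pi/2]$. With this choice, $\frac{1}{b-a}\int_a^b f(t)g(t)\,\td t$ becomes $\frac{2}{\pi}\int_0^{\pi/2}(1-r^2\sin^2 t)\,\td t$, which evaluates in closed form, while $\frac{1}{b-a}\int_a^b f(t)\,\td t=\frac{2}{\pi}\varepsilon(r)$. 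Thus the Lupaş inequality converts directly into a two-sided bound on $\varepsilon(r)^2$, whose square root produces the claimed estimates.

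First I would compute the elementary integral $\int_0^{\pi/2}(1-r^2\sin^2 t)\,\td t=\frac{\pi}{2}-\frac{\pi r^2}{4}=\frac{\pi(2-r^2)}{4}$, so that $\frac{2}{\pi}\int_0^{\pi/2}f g\,\td t=\frac{2-r^2}{2}$. Next I would evaluate $\|f'\|_2^2$: since $f(t)=\sqrt{1-r^2\sin^2 t}\,$, we have $f'(t)=\dfrac{-r^2\sin t\cos t}{\sqrt{1-r^2\sin^2 t}}$, hence $|f'(t)|^2=\dfrac{r^4\sin^2 t\cos^2 t}{1-r^2\sin^2 t}$. This integral is the one genuinely nontrivial computation, but it can be handled by the substitution $u=\sin^2 t$ or by writing $\sin^2 t\cos^2 t=\frac{1}{r^2}\sin^2 t-\frac{1}{r^2}\sin^2 t(1-r^2\sin^2 t)$ to split off an elementary piece, reducing to integrals whose closed forms involve $\sqrt{1-r^2}$. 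I expect the result to take the shape $\|f'\|_2^2=\frac{\pi}{4}\bigl(c_1-c_2\sqrt{1-r^2}\bigr)$ for suitable constants, which is exactly what the appearance of $\sqrt{1-r^2}$ under the radicals in~\eqref{(3.1)} signals.

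With these two quantities in hand, Lemma~\ref{yin-qi-lem2.1} with $b-a=\pi/2$ gives
\begin{equation*}
\left|\frac{2-r^2}{2}-\left(\frac{2}{\pi}\varepsilon(r)\right)^2\right|\le \frac{\pi/2}{\pi^2}\,\|f'\|_2^2,
\end{equation*}
which I would rearrange into
\begin{equation*}
\frac{2-r^2}{2}-\frac{1}{2\pi}\|f'\|_2^2\le \frac{4}{\pi^2}\varepsilon(r)^2\le \frac{2-r^2}{2}+\frac{1}{2\pi}\|f'\|_2^2.
\end{equation*}
Substituting the closed form of $\|f'\|_2^2$ and simplifying should collapse the constants so that the lower and upper bounds become $\frac{6+2\sqrt{1-r^2}-3r^2}{16}$ and $\frac{10-2\sqrt{1-r^2}-5r^2}{16}$ respectively; taking square roots and multiplying by $\frac{\pi}{2}$ then yields~\eqref{(3.1)}.

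The main obstacle is the evaluation of $\|f'\|_2^2$ and the subsequent algebraic verification that the constants match precisely. The algebra must be organized so that the elementary part of the integral contributes the rational terms $6-3r^2$ and $10-5r^2$, while the $\sqrt{1-r^2}$ part contributes the $\pm 2\sqrt{1-r^2}$ terms with the correct signs. A secondary point worth checking is the hypothesis $f'\in L_2[0,\pi/2]$: since $r<1$ the denominator $\sqrt{1-r^2\sin^2 t}$ stays bounded below by $\sqrt{1-r^2}>0$, so $f'$ is bounded and the $L_2$ condition holds, legitimizing the use of the lemma.
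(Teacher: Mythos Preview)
Your overall strategy is exactly the paper's: the same choice $f=g=\sqrt{1-r^2\sin^2 t}$ on $[0,\pi/2]$, the same reduction via Lemma~\ref{yin-qi-lem2.1} to a two-sided bound on $\frac{4}{\pi^2}\varepsilon(r)^2$ about $\frac{2-r^2}{2}$, and the same identification of the error term with $\frac{1}{2\pi}\|f'\|_2^2=\frac{1}{2\pi}\int_0^{\pi/2}\frac{r^4\sin^2 t\cos^2 t}{1-r^2\sin^2 t}\,\td t$.

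The one substantive difference is how that integral is evaluated. The paper expands $\frac{1}{1-r^2\sin^2 t}$ as a power series, applies Wallis' formula term by term, and then resums by recognizing the derivative of the resulting series as $r\bigl(\frac{1}{\sqrt{1-r^2}}-1\bigr)$, obtaining $\frac{1}{2\pi}\|f'\|_2^2=\frac14\bigl(1-\sqrt{1-r^2}-\tfrac{r^2}{2}\bigr)$. Your proposed direct algebraic route works too and is arguably cleaner, but be careful: the identity you wrote, $\sin^2 t\cos^2 t=\frac{1}{r^2}\sin^2 t-\frac{1}{r^2}\sin^2 t(1-r^2\sin^2 t)$, actually equals $\sin^4 t$, not $\sin^2 t\cos^2 t$. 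A decomposition that does work is $r^2\cos^2 t=(r^2-1)+(1-r^2\sin^2 t)$, which reduces the integrand to $(1-r^2)-\frac{1-r^2}{1-r^2\sin^2 t}+r^2\sin^2 t$; then $\int_0^{\pi/2}\frac{\td t}{1-r^2\sin^2 t}=\frac{\pi}{2\sqrt{1-r^2}}$ gives $\|f'\|_2^2=\frac{\pi}{2}\bigl(1-\tfrac{r^2}{2}-\sqrt{1-r^2}\bigr)$, matching the paper. Note this is not of the form $\frac{\pi}{4}(c_1-c_2\sqrt{1-r^2})$ with constant $c_1$, so your expected template needs a small adjustment, but the final bounds $\frac{6+2\sqrt{1-r^2}-3r^2}{8}$ and $\frac{10-2\sqrt{1-r^2}-5r^2}{8}$ for $\frac{4}{\pi^2}\varepsilon(r)^2$ come out exactly as claimed.
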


\begin{proof}
Taking
$$
{f(t)=g(t)=\sqrt{1-r^2\sin^2t}\,}
$$
and letting $a=0$ and $b=\frac{\pi}{2}$ in Lemma~\ref{yin-qi-lem2.1} yield
\begin{multline}\label{(3.2)}
\biggl| {\frac{2}{\pi}\int_0^{\pi /2} \bigl(1-r^2\sin^2t\bigr) \td t-\frac{4} {\pi^2}\varepsilon^2(r)}\biggr|
=\biggl| {\frac{4} {\pi^2}\varepsilon^2(r)-\frac{2-r^2}{2}} \biggr| \\
\le \frac{1}{{2\pi}}\int_0^{\pi /2} {\frac{{r^4\sin^2t\cos^2t}} {{1-r^2\sin^2t}}} \td t
=\frac{1}{{2\pi}}\int_0^{\pi /2} {r^4\sin^2t\cos^2t\sum_{n=0}^\infty {r^{2n}\sin^{2n} t}} \td t \\
=\frac{1}{{2\pi}}\sum_{n=0}^\infty {\int_0^{\pi /2} {r^{2n+4}\bigl(\sin^{2n+2} t-\sin^{2n+4} t\bigr)} \td t}
=\frac{1}{4}h(r),
\end{multline}
where, by
\begin{equation}\label{int-sine-i-power}
\int_0^{\pi /2} {\sin^{2i} t} \td t=\frac{\pi}{2}\frac{{(2i-1)!!}} {{(2i)!!}}
\end{equation}
for $i\in\mathbb{N}$,
$$
h(r)=\sum_{n=0}^\infty {\frac{{(2n+1)!!}}
{{(2n+2)!!}}\frac{{r^{2n+4}}} {{2n+4}}}.
$$
A direct calculation yields
\begin{equation*}
h'(r)=\sum_{n =0}^\infty {\frac{{(2n+1)!!}} {{(2n+2)!!}}r^{2n+3} =
r\sum_{n=0}^\infty {\frac{{(2n+1)!!}} {{(2n +
2)!!}}r^{2n+2} =r\biggl( {\frac{1} {{\sqrt{1-r^2}\,}}-1}
\biggr)}},
\end{equation*}
where we used
\begin{equation}
\frac{1} {{\sqrt{1-t^2}\,}}=\sum_{n=0}^\infty{\frac{{(2i-1)!!}} {{(2i)!!}}t^{2n}},\quad |t|<1.
\end{equation}
Hence, we have
\begin{equation}
h(r)=h(0)+\int_0^r {h'(r)} \td r=1-\sqrt{1-r^2}\,-\frac{{r^2}} {2}.
\end{equation}
Substituting this equality into~\eqref{(3.2)} gives
$$
\biggl| {\frac{4} {{\pi^2}}\varepsilon^2(r)-\frac{2-r^2} {2}} \biggr|
\le\frac{1} {4}-\frac{{\sqrt{1-r^2}\,}} {4}-\frac{{r^2}} {8}.
$$
This means the double inequality~\eqref{(3.1)}.
\end{proof}

\begin{remark}
By the well-known software \textsc{Mathematica}, we can show
that
\begin{enumerate}
\item
the left-hand side inequality in~\eqref{(3.1)} refines the corresponding
one in~\eqref{guo-qi-ellp-ineq-log};
\item
the right-hand side inequalities in~\eqref{(3.1)} and~\eqref{guo-qi-ellp-ineq-log} are not contained each other;
\item
when $r\in[\frac{1}{4},\frac{3}{4}]$, the right-hand side inequality in~\eqref{(3.1)} is better than the corresponding one in~\eqref{guo-qi-ellp-ineq-log}.
\end{enumerate}
\end{remark}

\begin{theorem}
For $r\in(0,1)$, we have
\begin{equation}\label{(3.5)}
\frac{{\pi \sqrt{32-r^4-32r^2}\,}} {{8\sqrt2\, \sqrt[4]{{(1-
r^2 )^3}}\,}} \le \kappa(r) \le \frac{{\pi \sqrt{r^4-
32r^2+32}\,}} {{8\sqrt2\, \sqrt[4]{{(1-r^2 )^3}}\,}}.
\end{equation}
\end{theorem}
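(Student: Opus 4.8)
The plan is to mirror the proof of the preceding theorem, applying Lemma~\ref{yin-qi-lem2.1} on the interval $[a,b]=[0,\pi/2]$ but now to the integrand of $\kappa(r)$ in place of that of $\varepsilon(r)$. Thus I would set
$$
f(t)=g(t)=\frac{1}{\sqrt{1-r^2\sin^2t}\,},
$$
so that $\int_0^{\pi/2}f(t)\,\td t=\kappa(r)$, while the product integral collapses to the elementary one
$$
\int_0^{\pi/2}f(t)g(t)\,\td t=\int_0^{\pi/2}\frac{\td t}{1-r^2\sin^2t}=\frac{\pi}{2\sqrt{1-r^2}\,},
$$
which follows from $1-r^2\sin^2t=\cos^2t+(1-r^2)\sin^2t$ together with the standard evaluation $\int_0^{\pi/2}\frac{\td t}{\cos^2t+b^2\sin^2t}=\frac{\pi}{2b}$. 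Since $b-a=\pi/2$, the left-hand side of Lupa\c{s}' inequality then reduces to $\bigl|\frac{1}{\sqrt{1-r^2}}-\frac{4}{\pi^2}\kappa^2(r)\bigr|$, paralleling the structure already seen in~\eqref{(3.2)}.

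Next I would compute the right-hand side. Differentiating gives
$$
f'(t)=\frac{r^2\sin t\cos t}{(1-r^2\sin^2t)^{3/2}},\qquad
\bigl\|f'\bigr\|_2^2=\int_0^{\pi/2}\frac{r^4\sin^2t\cos^2t}{(1-r^2\sin^2t)^{3}}\,\td t.
$$
To evaluate the integral I would expand the cube in the denominator through the binomial series $(1-x)^{-3}=\sum_{n\ge0}\frac{(n+1)(n+2)}{2}x^n$ with $x=r^2\sin^2t$, rewrite $\sin^2t\cos^2t\sin^{2n}t=\sin^{2n+2}t-\sin^{2n+4}t$, and integrate term by term via~\eqref{int-sine-i-power}. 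The difference of consecutive Wallis integrals reduces to $\frac{\pi}{2}\frac{(2n+1)!!}{(2n+2)!!}\frac{1}{2n+4}$, and once the factor $\frac{(n+1)(n+2)}{2}$ cancels against $2n+4=2(n+2)$ the series collapses to $\frac{\pi}{16}\sum_{n\ge0}\frac{(2n+1)!!}{(2n)!!}r^{2n+4}$. Recognizing $\sum_{n\ge0}\frac{(2n+1)!!}{(2n)!!}r^{2n}=(1-r^2)^{-3/2}$, I obtain the closed form
$$
\bigl\|f'\bigr\|_2^2=\frac{\pi r^4}{16(1-r^2)^{3/2}}.
$$

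Feeding the two evaluations into Lemma~\ref{yin-qi-lem2.1} with $\frac{b-a}{\pi^2}=\frac{1}{2\pi}$ gives
$$
\biggl|\frac{1}{\sqrt{1-r^2}}-\frac{4}{\pi^2}\kappa^2(r)\biggr|\le\frac{r^4}{32(1-r^2)^{3/2}},
$$
and writing $\frac{1}{\sqrt{1-r^2}}=\frac{32(1-r^2)}{32(1-r^2)^{3/2}}$ to place both sides over a common denominator yields
$$
\frac{32-32r^2-r^4}{32(1-r^2)^{3/2}}\le\frac{4}{\pi^2}\kappa^2(r)\le\frac{32-32r^2+r^4}{32(1-r^2)^{3/2}}.
$$
Solving for $\kappa(r)$ and using $\sqrt{128}=8\sqrt2$ produces exactly~\eqref{(3.5)}.

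The genuinely delicate step is the series evaluation of $\|f'\|_2^2$: the third power in the denominator makes the term-by-term integration heavier than in the previous theorem, and the cancellations must be tracked carefully to land on the clean factor $(1-r^2)^{-3/2}$. A secondary point worth flagging is that the radicand $32-32r^2-r^4$ of the lower bound turns negative as $r\to1^-$ (it vanishes near $r\approx0.985$); there the lower estimate is vacuous, whereas the squared inequality coming from Lupa\c{s}' inequality remains valid throughout $(0,1)$.
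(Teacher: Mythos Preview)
Your proof is correct and takes essentially the same approach as the paper: apply Lupa\c{s}' inequality with $f=g=(1-r^2\sin^2t)^{-1/2}$ on $[0,\pi/2]$, evaluate both sides, and solve for $\kappa(r)$. The only differences are cosmetic---the paper computes $\int_0^{\pi/2}(1-r^2\sin^2t)^{-1}\td t$ via a power series and sums the series for $\|f'\|_2^2$ through an auxiliary function $p(r)$ with antiderivative $(1-r^2)^{-1/2}-1$, whereas you recognise the closed forms $\tfrac{\pi}{2}(1-r^2)^{-1/2}$ and $(1-r^2)^{-3/2}$ directly---and your caveat about the lower radicand turning negative near $r\approx0.985$ is a useful observation not present in the paper.
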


\begin{proof}
Taking
$$
{f(t)=g(t)=\frac{1}{\sqrt{1-r^2\sin^2t}\,}}
$$
and letting $a=0$ and $b=\frac{\pi}{2}$ in Lemma~\ref{yin-qi-lem2.1} lead to
\begin{gather*}
\biggl| {\frac{2}{\pi}\int_0^{\pi /2} {\frac{1} {{1-r^2\sin^2t}}} \td t-\frac{4}{\pi^2}\kappa^2(r)} \biggr|
=\Biggl| {\frac{4} {{\pi^2}}\kappa^2(r)-\frac{2}{\pi}\int_0^{\pi /2} {\sum_{n=0}^\infty {r^{2n}\sin^{2n} t}} \td t} \Biggr| \\
=\Biggl| {\frac{4}{\pi^2}\kappa^2(r)-\sum_{n=0}^\infty {\frac{{(2n-1)!!}} {{(2n)!!}}r^{2n}}} \Biggr|
=\biggl|{\frac{4} {\pi^2}\kappa^2(r)-\frac{1}{{\sqrt{1-r^2}\,}}} \biggr| \\
\le \frac{1}{{2\pi}}\int_0^{\pi /2} {\frac{{r^4\sin^2t\cos^2t}} {{\bigl(1-r^2\sin^2t\bigr)^3}}} \td t\\
=\frac{1}{{4\pi}}\int_0^{\pi /2} {r^4\sin^2t\cos^2t\sum_{n=0}^\infty{(n+2)(n+1)r^{2n}\sin^{2n} t}}\td t\\
=\frac{1}{8}\sum_{n=0}^\infty {(n+2)(n+1)r^{2n+4} \frac{{(2n+ 1)!!}} {{(2n+2)!!}}\frac{1}{{2n+4}}}
=\frac{{r^3}}{{32}}p(r),
\end{gather*}
where
$$
p(r)=\sum_{n=0}^\infty {\frac{{(2n+2)(2n +1)!!}} {{(2n+2)!!}}r^{2n+1}}
$$
satisfies $p(0)=0$,
$$
\int_0^r {p(r)} \td r=\sum_{n=0}^\infty {\frac{{(2n +1)!!}} {{(2n+2)!!}}r^{2n+2}
=\frac{1} {{\sqrt{1-r^2}\,}}-1},
$$
and so
\begin{equation}
{p(r)=\frac{r} {{\sqrt{(1-r^2)^3}\,}}}.
\end{equation}
Consequently, we find
\begin{equation}
\biggl| {\frac{4} {\pi^2}\kappa^2(r)-\frac{1} {{\sqrt{1-r^2}\,}}} \biggr|
\le \frac{{r^4}}{{32\sqrt{(1-r^2 )^3}\,}}.
\end{equation}
The double inequality~\eqref{(3.5)} follows.
\end{proof}

\begin{theorem}
For $r>0$ and $s>0$, we have
\begin{equation}\label{(3.9)}
\frac{\pi} {8}\sqrt{\frac{{8rs(r^2+s^2 )-(s^2-r^2 )^2}}
{{rs}}}\, \le \varepsilon(r,s) \le \frac{\pi} {8}\sqrt
{\frac{{8rs(r^2+s^2 ) +(s^2-r^2 )^2}} {{rs}}}\, .
\end{equation}
\end{theorem}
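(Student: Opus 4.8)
The plan is to apply the Lupa\c{s} inequality (Lemma~\ref{yin-qi-lem2.1}) exactly as in the first two theorems, this time with
\[
f(t)=g(t)=\sqrt{r^2\cos^2t+s^2\sin^2t\,},\qquad a=0,\quad b=\frac{\pi}{2}.
\]
First I would dispose of the two elementary pieces. Since $f(t)g(t)=r^2\cos^2t+s^2\sin^2t$ and $\int_0^{\pi/2}\cos^2t\,\td t=\int_0^{\pi/2}\sin^2t\,\td t=\frac{\pi}{4}$, the average of the product is $\frac{2}{\pi}\int_0^{\pi/2}f(t)g(t)\,\td t=\frac{r^2+s^2}{2}$, while $\frac{2}{\pi}\int_0^{\pi/2}f(t)\,\td t=\frac{2}{\pi}\varepsilon(r,s)$. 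Hence the left-hand side of Lemma~\ref{yin-qi-lem2.1} reduces to $\bigl|\frac{4}{\pi^2}\varepsilon^2(r,s)-\frac{r^2+s^2}{2}\bigr|$, and the whole problem collapses to estimating $\|f'\|_2^2$.

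The core of the matter is the right-hand side. Differentiating gives
\[
f'(t)=\frac{(s^2-r^2)\sin t\cos t}{\sqrt{r^2\cos^2t+s^2\sin^2t\,}},
\]
so that $\|f'\|_2^2=(s^2-r^2)^2J$ with
\[
J=\int_0^{\pi/2}\frac{\sin^2t\cos^2t}{r^2\cos^2t+s^2\sin^2t}\,\td t.
\]
Evaluating $J$ is the step I expect to be the main obstacle: unlike the power-series computations in the first two proofs, here the integrand is genuinely two-parametric and no geometric-series trick is available. I would clear it with the substitution $u=\tan t$, which turns $J$ into the rational integral $\int_0^{\infty}\frac{u^2}{(1+u^2)^2(r^2+s^2u^2)}\,\td u$; a partial-fraction decomposition in $u^2$ together with $\int_0^\infty\frac{\td u}{1+u^2}=\frac{\pi}{2}$, $\int_0^\infty\frac{\td u}{(1+u^2)^2}=\frac{\pi}{4}$, and $\int_0^\infty\frac{\td u}{r^2+s^2u^2}=\frac{\pi}{2rs}$ collapses everything to $J=\frac{\pi}{4(r+s)^2}$. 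Therefore $\|f'\|_2^2=\frac{\pi(s-r)^2}{4}$ and the right-hand side of Lemma~\ref{yin-qi-lem2.1} equals $\frac{b-a}{\pi^2}\|f'\|_2^2=\frac{(s-r)^2}{8}$.

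Combining the two sides yields $\bigl|\frac{4}{\pi^2}\varepsilon^2(r,s)-\frac{r^2+s^2}{2}\bigr|\le\frac{(s-r)^2}{8}$. To land on the stated form I would weaken the right-hand side by $2rs\le(r+s)^2$, which gives $\frac{(s-r)^2}{8}\le\frac{(s-r)^2(s+r)^2}{16rs}=\frac{(s^2-r^2)^2}{16rs}$, so that
\[
\frac{r^2+s^2}{2}-\frac{(s^2-r^2)^2}{16rs}\le\frac{4}{\pi^2}\varepsilon^2(r,s)\le\frac{r^2+s^2}{2}+\frac{(s^2-r^2)^2}{16rs}.
\]
Multiplying through by $\frac{\pi^2}{4}$, putting each side over the common denominator $rs$, and taking square roots produces precisely~\eqref{(3.9)}. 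The one point I would flag is that the lower bound is genuinely informative only where its radicand $8rs(r^2+s^2)-(s^2-r^2)^2$ is nonnegative; where that quantity is negative the lower estimate holds trivially because $\varepsilon(r,s)>0$.
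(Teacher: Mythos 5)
Your proof is correct, and it follows the paper's own framework --- the same Lupa\c{s} lemma with the same choice $f(t)=g(t)=\sqrt{r^2\cos^2t+s^2\sin^2t\,}$ on $[0,\pi/2]$, and the same reduction of the left-hand side to $\bigl|\frac{4}{\pi^2}\varepsilon^2(r,s)-\frac{r^2+s^2}{2}\bigr|$ --- but it handles the key integral differently, and in fact more sharply. The paper never evaluates $J=\int_0^{\pi/2}\frac{\sin^2t\cos^2t}{r^2\cos^2t+s^2\sin^2t}\,\td t$ exactly: it applies the crude pointwise bound $\sin^2t\cos^2t\le\frac{1}{4}$ and then computes the elementary integral $\int_0^{\pi/2}\frac{\td t}{r^2\cos^2t+s^2\sin^2t}=\frac{\pi}{2rs}$, which lands directly on the bound $\frac{(s^2-r^2)^2}{16rs}$ and hence on \eqref{(3.9)}. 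You instead compute $J=\frac{\pi}{4(r+s)^2}$ exactly (your partial-fraction evaluation checks out), which gives the intermediate estimate $\bigl|\frac{4}{\pi^2}\varepsilon^2(r,s)-\frac{r^2+s^2}{2}\bigr|\le\frac{(s-r)^2}{8}$; since $2rs\le(r+s)^2$, this is stronger than the paper's bound, strictly so whenever $r\ne s$, and you then deliberately weaken it to recover the stated form. So your route costs a harder integral evaluation but buys a genuinely sharper inequality: keeping $\frac{(s-r)^2}{8}$ in place of $\frac{(s^2-r^2)^2}{16rs}$ would yield tighter bounds on $\varepsilon(r,s)$ than \eqref{(3.9)} itself, and the resulting lower-bound radicand $\frac{r^2+s^2}{2}-\frac{(s-r)^2}{8}=\frac{3r^2+2rs+3s^2}{8}$ is always positive, so the sharper version is never vacuous. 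Your closing caveat --- that the stated lower bound in \eqref{(3.9)} is only meaningful where $8rs(r^2+s^2)\ge(s^2-r^2)^2$ --- is well taken; the paper's proof has the same issue and does not mention it.
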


\begin{proof}
Taking
$$
{f(t)=g(t)={\sqrt{r^2\cos^2t+s^2\sin^2t}\,}}
$$
and letting $a=0$ and $b=\frac{\pi}{2}$ in Lemma~\ref{yin-qi-lem2.1} reveal
\begin{gather*}
\biggl|{\frac{2}{\pi}\int_0^{\pi /2} {\bigl(r^2\cos^2t+s^2\sin^2t\bigr)} \td t -\frac{4}{\pi^2}\varepsilon^2(r,s)}\biggr|
=\biggl| {\frac{4}{\pi^2}\varepsilon^2(r,s)-\frac{{r^2+s^2}}{2}} \biggr| \\
\le\frac{1}{{2\pi}}\int_0^{\pi/2} {\frac{{(s^2-r^2)^2\sin^2t\cos^2t}}{{r^2\cos^2t+s^2\sin^2t}}} \td t
\le \frac{{(s^2-r^2 )^2}}{{2\pi}}\frac{1} {4}\int_0^{\pi /2} {\frac{1}{{r^2\cos^2t+s^2\sin^2t}}} \td t \\
 =\frac{{(s^2-r^2 )^2}}{{8\pi}}\frac{1} {{rs}}\arctan \biggl(\frac{s} {r}\tan t|^{\pi/2}_0\biggr)
=\frac{{(s^2-r^2 )^2}}{{16rs}}.
\end{gather*}
This means the double inequality~\eqref{(3.9)}.
\end{proof}

\begin{theorem}
For $s>r>0$, we have
\begin{equation}\label{(3.11)}
\biggl| {\frac{4} {\pi^2}\kappa^2(r,s)-\frac{1} {{\pi rs}}}\biggr|
\le \frac{{s^2-r^2}} {{32rs}}\biggl( {\frac{1} {{s^2}}+\frac{1} {{r^2}}} \biggr).
\end{equation}
\end{theorem}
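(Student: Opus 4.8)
The plan is to run Lupa\c{s}' inequality (Lemma~\ref{yin-qi-lem2.1}) exactly as in the previous three proofs, feeding it $f(t)=g(t)=\bigl(r^2\cos^2t+s^2\sin^2t\bigr)^{-1/2}$ on $[a,b]=[0,\pi/2]$. With this choice $\frac{2}{\pi}\int_0^{\pi/2}f(t)\,\td t=\frac{2}{\pi}\kappa(r,s)$, while the product mean is $\frac{2}{\pi}\int_0^{\pi/2}\frac{\td t}{r^2\cos^2t+s^2\sin^2t}$, so the left-hand side of the lemma collapses to $\bigl|\frac{2}{\pi}\int_0^{\pi/2}\frac{\td t}{r^2\cos^2t+s^2\sin^2t}-\frac{4}{\pi^2}\kappa^2(r,s)\bigr|$. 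I would evaluate the surviving integral by the same arctangent antiderivative already used for $\varepsilon(r,s)$, obtaining $\int_0^{\pi/2}\frac{\td t}{r^2\cos^2t+s^2\sin^2t}=\frac{1}{rs}\bigl[\arctan\bigl(\tfrac{s}{r}\tan t\bigr)\bigr]_0^{\pi/2}=\frac{\pi}{2rs}$, so that the centring constant is $\frac{1}{rs}$.

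For the right-hand side $\frac{b-a}{\pi^2}\|f'\|_2\|g'\|_2=\frac{1}{2\pi}\|f'\|_2^2$, I set $u(t)=r^2\cos^2t+s^2\sin^2t$; then $u'(t)=2(s^2-r^2)\sin t\cos t$ and $f'(t)=-\tfrac12 u^{-3/2}u'(t)$, which gives $\|f'\|_2^2=(s^2-r^2)^2\int_0^{\pi/2}\frac{\sin^2t\cos^2t}{\bigl(r^2\cos^2t+s^2\sin^2t\bigr)^3}\,\td t$.

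The one genuinely new step, and the step I expect to be the main obstacle, is this $u^{-3}$ integral. In contrast to the one-parameter integrals of the first two theorems, the two-parameter denominator does not collapse into a single tidy binomial series, so I would instead differentiate the base identity $\int_0^{\pi/2}\frac{\td t}{a\cos^2t+b\sin^2t}=\frac{\pi}{2}(ab)^{-1/2}$ with respect to the parameters $a=r^2$ and $b=s^2$. The mixed derivative $\partial^2/\partial a\,\partial b$ turns the left side into $2\int_0^{\pi/2}\frac{\sin^2t\cos^2t}{u^3}\,\td t$ and the right side into $\frac{\pi}{8}(ab)^{-3/2}$, whence $\int_0^{\pi/2}\frac{\sin^2t\cos^2t}{u^3}\,\td t=\frac{\pi}{16r^3s^3}$.

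Substituting this value back gives $\frac{1}{2\pi}\|f'\|_2^2=\frac{(s^2-r^2)^2}{32r^3s^3}$, so Lupa\c{s}' inequality produces the sharp estimate $\bigl|\frac{4}{\pi^2}\kappa^2(r,s)-\frac{1}{rs}\bigr|\le\frac{(s^2-r^2)^2}{32r^3s^3}$. Since $s>r>0$ forces $s^2-r^2<s^2+r^2$, I would finally relax one factor to obtain $\frac{(s^2-r^2)^2}{32r^3s^3}\le\frac{(s^2-r^2)(s^2+r^2)}{32r^3s^3}=\frac{s^2-r^2}{32rs}\bigl(\frac{1}{s^2}+\frac{1}{r^2}\bigr)$, which is exactly the right-hand side of~\eqref{(3.11)}; the symmetric split into $\frac{1}{s^2}+\frac{1}{r^2}$ is what makes the stated form pleasant. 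Everything apart from the parametric-differentiation evaluation of the $u^{-3}$ integral is bookkeeping parallel to the earlier proofs.
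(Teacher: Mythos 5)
Your proposal is correct mathematics, and its skeleton is the same as the paper's: the same choice $f=g=\bigl(r^2\cos^2t+s^2\sin^2t\bigr)^{-1/2}$ in Lemma~\ref{yin-qi-lem2.1}, the same reduction of both sides, leading to the bound $\frac{1}{2\pi}\|f'\|_2^2=\frac{(s^2-r^2)^2}{2\pi}\int_0^{\pi/2}\frac{\sin^2t\cos^2t}{(r^2\cos^2t+s^2\sin^2t)^3}\,\td t$. The only methodological difference is how this integral is evaluated: the paper rewrites it as $\frac{r^2-s^2}{8\pi}\int_0^{\pi/2}\sin t\cos t\,\td\frac{1}{(r^2\cos^2t+s^2\sin^2t)^2}$, integrates by parts, and computes the two resulting pieces via the substitutions $\cot t$ and $\tan t$; your differentiation of $\int_0^{\pi/2}\frac{\td t}{a\cos^2t+b\sin^2t}=\frac{\pi}{2\sqrt{ab}}$ with respect to $a$ and $b$ reaches the same value $\frac{\pi}{16r^3s^3}$ more quickly. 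Both routes are fine.

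However, you should flag two discrepancies instead of silently absorbing them. First, your centring constant $\frac{1}{rs}$ is the correct one, since $\frac{2}{\pi}\int_0^{\pi/2}\frac{\td t}{r^2\cos^2t+s^2\sin^2t}=\frac{1}{rs}$; the printed theorem and the paper's proof both have $\frac{1}{\pi rs}$, which is an error, and with it the statement is actually false: letting $s\downarrow r$, the left-hand side of \eqref{(3.11)} tends to $\frac{1}{r^2}\bigl(1-\frac{1}{\pi}\bigr)>0$ while the right-hand side tends to $0$. So what you prove is the (only) correct version, but your closing claim that you land on ``exactly'' \eqref{(3.11)} is not literally true and needs an explicit remark. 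Second, your sharp Lupa\c{s} bound $\frac{(s^2-r^2)^2}{32r^3s^3}=\frac{s^2-r^2}{32rs}\bigl(\frac{1}{r^2}-\frac{1}{s^2}\bigr)$ is strictly smaller than the printed right-hand side: the paper presents its final step as an equality ending in $\frac{s^2-r^2}{32rs}\bigl(\frac{1}{r^2}+\frac{1}{s^2}\bigr)$, but its own two integrals equal $\frac{\pi}{4rs^3}$ and $\frac{\pi}{4r^3s}$, and their weighted difference produces the minus sign, i.e., your value; the plus sign is a slip. Your explicit relaxation $(s^2-r^2)^2\le(s^2-r^2)(s^2+r^2)$ is the honest way to recover the weaker printed bound, and it shows in passing that your estimate actually improves the theorem.
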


\begin{proof}
Taking
$$
{f(t)=g(t)=\frac{1}{{\sqrt{r^2\cos^2t+s^2\sin^2t}\,}}}
$$
and letting $a=0$ and $b=\frac{\pi}{2}$ in Lemma~\ref{yin-qi-lem2.1} figure out
\begin{gather*}
 \biggl| {\frac{2}{\pi}\int_0^{\pi /2} {\frac{1} {{r^2\cos^2t+s^2\sin^2t}}}\td t-\frac{4}
{\pi^2}\kappa^2(r,s)} \biggr|
=\biggl| {\frac{4}{\pi^2}\kappa^2(r,s)-\frac{1}{{\pi rs}}} \biggr| \\
\le\frac{1}{{2\pi}}\int_0^{\pi/2}{\frac{{\bigl(s^2-r^2\bigr)^2\sin^2t\cos^2t}} {{\bigl(r^2\cos^2t+s^2\sin^2t\bigr)^3}}} \td t \\
 =\frac{{r^2-s^2}}{{8\pi}}\int_0^{\pi /2} {\sin t\cos t} \td\frac{1}
{\bigl(r^2\cos^2t+s^2\sin^2t\bigr)^2} \\
 =\frac{{r^2-s^2}}{{8\pi}}\Biggl[ {\int_0^{\pi /2} {\frac{{\sin^2t}} {{\big(r^2\cos^2
t+s^2\sin^2t\bigr)^2}}} \td t-\int_0^{\pi /2} {\frac{{\cos^2t}}
{\bigl(r^2\cos^2t+s^2\sin^2t\bigr)^2}} \td t} \Biggr] \\
 =\frac{{r^2-s^2}}{{8\pi}}\Biggl[ {\int_0^{\pi /2} {\frac{{\csc^2 t}} {{\bigl(r^2 \cot^2
t+s^2\bigr)^2}}} \td t-\int_0^{\pi /2} {\frac{{\sec^2 t}}
{{\bigl(r^2+s^2 \tan^2 t\bigr)^2}}} \td t} \Biggr] \\
 =\frac{{r^2-s^2}}{{8\pi}}\biggl[ {\int_0^\infty {\frac{1} {{(r^2 u^2+s^2 )^2}}}
\td u-\int_0^{\pi /2} {\frac{1}{{(r^2+s^2 \lambda^2 )^2}}} \td\lambda} \biggr]
=\frac{{s^2-r^2}}{{32rs}}\biggl( {\frac{1} {{s^2}}+\frac{1}{{r^2}}} \biggr).
\end{gather*}
The proof is complete.
\end{proof}

\begin{remark}
From~\eqref{(3.11)}, we easily obtain
\begin{equation}\label{(3.13)}
\frac{\pi} {2}\sqrt{\frac{{32r^2 s^2-\pi(s^4-r^4 )}} {{32\pi r^3 s^3}}}\,
\le \kappa(r,s) \le \frac{\pi} {2}\sqrt{\frac{{32r^2 s^2+\pi(s^4-r^4 )}} {{32\pi r^3 s^3}}}\, .
\end{equation}
By the software \textsc{Mathematica}, we can show that the double inequality in~\eqref{(3.13)} and the inequality in~\cite[Theorem~2]{gq} are not contained each other.
\end{remark}

\end{document}